\let\@@pmod\pmod
\DeclareRobustCommand{\pmod}{\@ifstar\@pmods\@@pmod}
\def\@pmods#1{\mkern4mu({\operator@font mod}\mkern 6mu#1)}
\newcommand{\C}{\mathbb C}
\newcommand{\Z}{\mathbb Z}
\newcommand{\p}{\mathfrak p}
\newcommand{\q}{\mathfrak q}
\newcommand{\n}{{\mathfrak n}}
\renewcommand{\o}{\mathfrak o}
\renewcommand{\c}{\mathfrak c}
\newcommand{\W}{\mathcal W}
\newcommand{\G}{\mathcal G}
\renewcommand{\AA}{\mathcal A}
\renewcommand{\P}{\operatorname{P}}
\newcommand{\Q}{\operatorname{Q}}
\newcommand{\ZZ}{\operatorname{Z}}
\newcommand{\A}{\operatorname{A}}
\newcommand{\B}{\operatorname{B}}
\newcommand{\T}{\operatorname{T}}
\newcommand{\GL}{\operatorname{GL}}
\newcommand{\U}{\operatorname{U}}
\newcommand{\M}{\operatorname{M}}
\newcommand{\N}{\operatorname{N}}
\renewcommand{\v}{\operatorname{v}}
\DeclareMathOperator{\vol}{vol}
\DeclareMathOperator{\Ind}{Ind}
\DeclareMathOperator{\Hom}{Hom}
\DeclareMathOperator{\diag}{diag}
\theoremstyle{plain}
\newtheorem{theorem}{Theorem}[section]
\newtheorem{corollary}[theorem]{Corollary}
\newtheorem{lemma}[theorem]{Lemma}
\numberwithin{equation}{section}
\theoremstyle{remark}
\begin{document}
\title{Test vectors for Rankin--Selberg $L$-functions}
\author{Andrew R.\ Booker}
\email{andrew.booker@bristol.ac.uk}
\thanks{A.~R.~B.\ was partially supported by EPSRC Grant \texttt{EP/K034383/1}.
M.~L.\ was supported by a Royal Society University Research Fellowship.
No data were created in the course of this study.}
\author{M.\ Krishnamurthy}
\email{muthu-krishnamurthy@uiowa.edu}
\author{Min Lee}
\email{min.lee@bristol.ac.uk}
\address{School of Mathematics\\University of Bristol\\
University Walk\\Bristol\\BS8 1TW\\United Kingdom}
\address{Department of Mathematics\\University of Iowa\\14 MacLean
Hall\\Iowa City, IA 52242-1419\\USA}

\begin{abstract}
We study the local zeta integrals attached to a pair of generic
representations $(\pi,\tau)$ of $\GL_n\times\GL_m$, $n>m$, over a $p$-adic
field. Through a process of unipotent averaging we produce a pair
of corresponding Whittaker functions whose zeta integral is non-zero,
and we express this integral in terms of the Langlands parameters of $\pi$
and $\tau$. In many cases, these Whittaker functions also serve as a
test vector for the associated Rankin--Selberg (local) $L$-function.
\end{abstract}
\maketitle

\section{introduction}\label{intro}
Let $F$ be a non-archimedean local field with ring of integers $\o$
and residue field of cardinality $q$.
For $m<n$, let $\pi$ and $\tau$ be irreducible admissible
representations of $\GL_n(F)$ and $\GL_m(F)$, respectively. We fix an
additive character $\psi$ of $F$ with conductor $\o$ and assume
that $\pi$ and $\tau$ are generic relative to $\psi$.

Recall that the local \emph{zeta integral} $\Psi(s;W,W')$ is defined by
\begin{equation}\label{lz}
\Psi(s;W,W')=\int\limits_{\U_m(F)\backslash\GL_m(F)}
W\!\begin{pmatrix}h&\\&I_{n-m}\end{pmatrix}
W'(h)\|\det{h}\|^{s-\frac{n-m}{2}}\,dh,
\end{equation}
where $W\in\W(\pi,\psi)$ and $W'\in\W(\tau,\psi^{-1})$ are Whittaker
functions in the corresponding Whittaker spaces, and $\U_m$ is the group
of unipotent matrices.
It converges for $\Re(s)\gg1$, and the collection of such zeta integrals spans
a fractional ideal $\C[q^s,q^{-s}]L(s,\pi\boxtimes\tau)$ of the ring
$\C[q^s,q^{-s}]$.
We may choose the generator
to satisfy $1/L(s,\pi\boxtimes\tau)\in\C[q^{-s}]$ and
$\lim_{s\to\infty}L(s,\pi\boxtimes\tau)=1$, and this gives the
local Rankin--Selberg factor attached to the pair $(\pi,\tau)$
in \cite[\S2.7]{JPSS3}.

In particular, if we define a map
\[
\W(\pi,\psi)\otimes\W(\tau,\psi^{-1})\longrightarrow\C(q^{-s})
\]
via
\[
W\otimes W'\mapsto\Psi(s;W,W'),
\]
then there is an element in $\W(\pi,\psi)\otimes\W(\tau,\psi^{-1})$
that maps to $L(s,\pi\boxtimes\tau)$. However, \emph{a priori} this
element need not be a pure tensor.  In  this paper, we produce a
pure tensor $W\otimes W'$ for which the associated zeta integral is
explicitly computable and non-zero.  The precise result that we prove
is the following.
\begin{theorem}\label{main}
Let $\{\alpha_i\}_{i=1}^n$ and $\{\gamma_j\}_{j=1}^m$ denote the
Langlands parameters of $\pi$ and $\tau$, respectively, and let
$L(s,\pi\times\tau)$ be the naive Rankin--Selberg $L$-factor defined by
$$
L(s,\pi\times\tau)=\prod_{i=1}^n\prod_{j=1}^m
\bigl(1-\alpha_i\gamma_jq^{-s}\bigr)^{-1}.
$$
Then there is a pair $(W,W')\in\W(\pi,\psi)\times\W(\tau,\psi^{-1})$,
described explicitly in \S\ref{mc}, such that
\[
\Psi(s;W,W')=L(s,\pi\times\tau).
\]
\end{theorem}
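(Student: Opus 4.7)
The plan is to reduce the calculation to the fully unramified case by realizing $\pi$ and $\tau$ as constituents of principal series and performing a unipotent averaging procedure on the Jacquet integral. First, by the Bernstein--Zelevinsky subquotient theorem, there exist quasicharacters $\chi_1,\dots,\chi_n$ of $F^{\times}$ such that $\pi$ is a subquotient of $\Ind_{\B_n}^{\GL_n}(\chi_1\otimes\cdots\otimes\chi_n)$, with $\alpha_i=\chi_i(\varpi)$ when $\chi_i$ is unramified and $\alpha_i=0$ otherwise; analogous characters $\eta_1,\dots,\eta_m$ describe $\tau$. The naive $L$-factor then factors as $\prod_{i,j}(1-\chi_i(\varpi)\eta_j(\varpi)q^{-s})^{-1}$, contributing trivially for any pair involving a ramified character.

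Second, I would construct $W$ and $W'$ as Jacquet integrals of sections $f_\pi,f_\tau$ of the respective induced representations, obtained by averaging a spherical-type section over compact open subgroups of the unipotent radical. The level of the averaging would be matched to the conductors of the ramified $\chi_i$ (resp.\ $\eta_j$); by orthogonality of $\psi$ on $\o/\p^c$, this averaging kills the contributions of ramified characters to subsequent torus integrals while leaving unramified contributions untouched. Since $\pi$ is the unique generic constituent of any principal series containing it, the Jacquet integral lands in $\W(\pi,\psi)$ as required, and similarly for $W'$.

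Third, I would substitute $(W,W')$ into the zeta integral, unfold using the Iwasawa decomposition $\GL_m(F)=\B_m(F)\cdot\GL_m(\o)$ together with the explicit formula for the Jacquet--Whittaker integral, and reduce to an integral over the diagonal torus. The torus integral should split as a product of $nm$ one-dimensional integrals: each unramified pair $(\chi_i,\eta_j)$ contributes a geometric series summing to $(1-\alpha_i\gamma_jq^{-s})^{-1}$, while each ramified pair contributes $1$ by the orthogonality noted above, matching the naive factor term by term.

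The main obstacle, I expect, is step two: designing the averaged sections so that the resulting torus integral separates cleanly into independent $(i,j)$-factors, without cross-terms coupling distinct pairs. This separation amounts to a refinement of the Shintani--Casselman--Shalika computation that accommodates ramification, and it is the technical heart of the argument; in particular, the choice of averaging level must be simultaneously large enough to annihilate ramified contributions and small enough that the resulting section remains a valid test vector for every unramified pair.
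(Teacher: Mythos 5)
There is a genuine gap, and it occurs already at your first step. The Bernstein--Zelevinsky theory does not place a general generic $\pi$ inside a principal series induced from quasicharacters of the torus: the supercuspidal support of $\pi$ may involve supercuspidal representations of $\GL_a(F)$ with $a\ge 2$ (e.g.\ $\pi$ itself supercuspidal on $\GL_n$, $n\ge2$, or an essentially square-integrable $\sigma_b(\eta)$ with $\eta$ supercuspidal on $\GL_a$, $a\ge2$). For such $\pi$ there are no characters $\chi_1,\dots,\chi_n$ with $\pi\subset\Ind_{\B_n}^{\GL_n}(\chi_1\otimes\cdots\otimes\chi_n)$, so the Jacquet-integral construction of $W$ never reaches $\W(\pi,\psi)$, and the identification $\alpha_i=\chi_i(\varpi)$ breaks down; in the theorem the $\alpha_i$ are defined only through $L(s,\pi)=\prod_i(1-\alpha_iq^{-s})^{-1}$, with zeros allowed, and carry no principal-series realization in general. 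Your second and third steps also rest on a claim that fails even in the spherical case: the torus integral does not split into $nm$ independent one-dimensional geometric series. The unramified Rankin--Selberg computation is a single sum over dominant weights $\lambda$ of $s_\lambda(\alpha)s_\lambda(\gamma)q^{-s|\lambda|}$, evaluated by the Cauchy identity, and there is no term-by-term factorization over pairs $(i,j)$; you correctly flag this ``separation'' as the technical heart, but it is precisely the step that is not supplied and, as formulated, cannot be.

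For comparison, the paper avoids principal series entirely by working with the essential (new) vectors $\xi^0$ of $\pi$ and $\varphi^0$ of $\tau$, which exist for every generic representation. When $\tau$ is unramified, Matringe's result already gives $\Psi(s;W_{\xi^0},W_{\varphi^0})=L(s,\pi\boxtimes\tau)=L(s,\pi\times\tau)$. When $\tau$ is ramified, the unipotent averaging is applied not to sections but to $W_{\xi^0}$ itself: one averages $\pi(u(\beta))\xi^0$ over $(\beta_1,\dots,\beta_{m-1})\in(\q^{-1}/\o)^{m-1}$ with the last coordinate fixed at $\varpi^{-\v(\c)}$, where $\q$ and $\c$ are the conductors of $\tau$ and of $\omega_\tau$. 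The averaging forces the $\GL_m(\o)$-variable into $K_0$-type subgroups and produces a Gauss sum; a support analysis (if $\omega_\tau$ is ramified) or the vanishing of $\int_{K_0(\q\p^{-r}\cap\o)}W_{\varphi^0}(ak)\,dk$ for $r>0$ from the newform theory of Jacquet--Piatetski-Shapiro--Shalika (if $\omega_\tau$ is unramified) kills all but one term. The remaining torus sum is evaluated using Miyauchi's formula, expressing the essential Whittaker functions on the torus as $\delta^{1/2}$ times Schur polynomials in the Langlands parameters (with zeros inserted for the ``missing'' parameters), and then the Cauchy identity yields exactly $\prod_{i,j}(1-\alpha_i\gamma_jq^{-s})^{-1}$. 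These inputs---newform theory, Matringe's and Miyauchi's explicit formulas, and the conductor-matched averaging---are what replace the unavailable principal-series reduction and the unavailable $(i,j)$-factorization in your outline.
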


When $\Psi(s;W,W')=L(s,\pi\boxtimes\tau)$, the pair $(W,W')$ is called
a \emph{test vector} for $(\pi,\tau)$. Hence the theorem produces a
test vector whenever $L(s,\pi\times\tau)=L(s,\pi\boxtimes\tau)$---for
instance, if either $\pi$ or $\tau$ is unramified or if
$L(s,\pi\boxtimes\tau)=1$. In general, one has
$L(s,\pi\times\tau)=P(q^{-s})L(s,\pi\boxtimes\tau)$ for a non-zero
polynomial $P\in\C[X]$ (see Lemma~\ref{aux}).

The overview of our method is as follows. Let $\xi^0$
(resp.\ $\varphi^0$) denote the ``essential vector'' in the space of
$\pi$ (resp.\ $\tau$), and let $W_{\xi^0}\in\W(\pi,\psi)$ (resp.\
$W_{\varphi^0}\in\W(\tau,\psi^{-1})$) be the associated essential
Whittaker functions, as described in detail in \S\ref{prelim}.
When $\tau$ is unramified, it follows from \cite[Corollary 3.3]{Mat} that
\begin{equation}\label{unr3}
L(s,\pi\boxtimes\tau)
=\int\limits_{\U_m(F)\backslash\GL_m(F)}
W_{\xi^0}\!\begin{pmatrix}h&\\&I_{n-m}\end{pmatrix}
W_{\varphi^0}(h)\|\det{h}\|^{s-\frac{n-m}{2}}\,dh
\end{equation}
for a suitable normalization of the measure on
$\U_m(F)\backslash\GL_m(F)$.  When $m=n-1$, the above equality is part
of the characterization of the essential vector in \cite{JPSS,J1}; the
fact that it holds for any $m<n$ is the result of a concrete realization
of essential functions in \cite{Mat}.
On the other hand, if $\tau$ is ramified then the local integral
in \eqref{unr3} vanishes.  Through a process of unipotent averaging
(see \eqref{uav} below), we modify $W_{\xi^0}$ to obtain a Whittaker
function $W\in\W(\pi,\psi)$ such that the resulting zeta integral
$\Psi(s;W,W_{\varphi^0})$ equals $cL(s,\pi\times\tau)$ for a non-zero
number $c\in\C$, depending on the conductor of $\tau$, its central
character $\omega_\tau$, and $\psi$. Setting $W'=c^{-1}W_{\varphi^0}$,
we obtain the required pair $(W,W')$.

We mention some related results in the literature. First,
if $\pi$ and $\tau$ are discrete series representations, then the
existence of a test vector $(W,W')$ was shown in \cite{Mat2}, but
the Whittaker function $W'$ there is taken to be in a larger space,
namely the Whittaker space associated to the standard module of $\tau$.
Second, the so-called \emph{local Birch lemma}, arising in the context
of $\p$-adic interpolation of special values of twisted Rankin--Selberg
(global) $L$-functions, is also related.  It concerns evaluation of a
local integral in the special case that $\pi$ is unramified and $\tau$ is
the twist of an unramified representation by a character with non-trivial
conductor; see \cite[Proposition 3.1]{KMS} and \cite[Theorem 2.1]{Jan}.
The approach in \cite{KMS} is similar to ours in that it also uses
a process of unipotent averaging in order to modify the Whittaker
function on the larger general linear group. We can of course apply
Theorem~\ref{main} to their setup: Since $L(s,\pi\times\tau)=1$ in this
case, the pair $(W,W_{\varphi^0})$ described above has the property that
$\Psi(s;W,W_{\varphi^0})$ is an explicit constant (independent of $s$).

Finally, note that one can obtain a global version of Theorem \ref{main} by
combining the test vectors at all (finite) places.  In work in progress,
we study the analogous question over an archimedean local field.

\subsection*{Acknowledgements}
The second author (M.~K.) would like to thank A.~Raghuram for pointing
him to \cite{KMS}.

\section{preliminaries}\label{prelim}
Let $\p$ be the unique maximal ideal in $\o$. We fix a generator
$\varpi$ of $\p$ with absolute value $\|\varpi\|=q^{-1}$. Let
$\v:F^\times\rightarrow\Z$ denote the valuation map, and extend it to
fractional ideals in the usual way. For any
$n>1$, let $\B_n=\T_n\U_n$ be the Borel subgroup of $\GL_n$
consisting of upper triangular matrices; let $\P_n'\supset\B_n$ be the
standard parabolic subgroup of type $(n-1,1)$ with Levi decomposition
$\P_n'=\M_n\N_n$. Then $\M_n\cong\GL_{n-1}\times\GL_1$
and
\[
\N_n=\left\{\begin{pmatrix}I_{n-1}&\ast\\&1\end{pmatrix}\right\}.
\]
Also, we write $\ZZ_n$ to denote the center consisting of scalar matrices
and $\A_n\subset\T_n$ to denote the subtorus consisting of diagonal
matrices with lower-right entry $1$.

If $R$ is any $F$-algebra and $H$ is any algebraic $F$-group, we
write $H(R)$ to denote the corresponding group of $R$-points. Let
$\P_n(R)\subset\P_n'(R)$ denote the mirabolic subgroup consisting of
matrices whose last row is of the form $(0,\ldots,0,1)$, i.e.,
$$
\P_n(R)=\left\{\begin{pmatrix}h&y\\&1\end{pmatrix}:
h\in\GL_{n-1}(R), y\in R^{n-1}\right\}
\cong\GL_{n-1}(R)\ltimes\N_n(R).
$$
The character
\[
u\mapsto\psi\!\left(\sum_{i=1}^{n-1}u_{i,i+1}\right)\quad\text{for }u\in\U_n(F)
\]
defines a \emph{generic character} of $\U_n(F)$, and by abuse of
notation we continue to denote this character by $\psi$. Further,
for any algebraic subgroup $V\subseteq\U_n$, $\psi$ defines a
character of $V(F)$ via restriction. In particular, we may consider
the character $\psi|_{\N_n(F)}$; its stabilizer in $\M_n(F)$ is then
$\P_{n-1}(F)$, where we regard $\P_{n-1}$ as a subgroup of $\M_n$ via
$h\mapsto\begin{psmallmatrix}h&\\&1\end{psmallmatrix}$.

An irreducible representation $(\pi,V_\pi)$ of $\GL_n(F)$ is said to
be \emph{generic} if
\[
\Hom_{\GL_n(F)}\bigl(V_\pi,\Ind_{\U_n(F)}^{\GL_n(F)}\psi\bigr)\ne0.
\]
By Frobenius reciprocity, this means that there is a non-zero linear form
$\lambda:V_\pi\to\C$ satisfying $\lambda(\pi(u)v)=\psi(u)\lambda(v)$
for $v\in V_\pi$, $u\in\U_n(F)$.  It is known (see \cite{G-K}) that for
a generic $\pi$ the space of such linear functionals, or equivalently
the space $\Hom_{\GL_n(F)}(V_\pi,\Ind_{\U_n(F)}^{\GL_n(F)}\psi)$,
is of dimension $1$. Let $\W(\pi,\psi)$ denote the Whittaker model
of $\pi$, viz.\ the space of functions $W_v$ on $\GL_n(F)$ defined
by $W_v(g)=\lambda(\pi(g)v)$ for $v\in V_\pi$. Then $\W(\pi,\psi)$
is independent of the choice of $\lambda$, and for $u\in\U_n(F)$,
$g\in \GL_n(F)$,
\begin{align*}
W_v(ug)&=\psi(u)W(g),\\
W_v(g)&=W_{\pi(g)v}(I_n).
\end{align*}

We will consider certain compact open subgroups of $\GL_n(F)$;
namely, for any integer $f\ge 0$, set
\begin{align*}
K_1(\p^f)&=\left\{g\in\GL_n(\o):g\equiv
\begin{psmallmatrix}&&&\ast\\&\ast&&\vdots\\&&&\ast\\0&\cdots&0&1\end{psmallmatrix}
\pmod*{\p^f}\right\},\\
K_0(\p^f)&=\left\{g\in\GL_n(\o):g\equiv
\begin{psmallmatrix}&&&\ast\\&\ast&&\vdots\\&&&\ast\\0&\cdots&0&\ast\end{psmallmatrix}
\pmod*{\p^f}\right\},
\end{align*}
so that $K_1(\p^f)$ is a normal subgroup of $K_0(\p^f)$,
with quotient $K_0(\p^f)/K_1(\p^f)\cong(\o/\p^f)^\times$.

Next we introduce our choice of measures. For $n\ge1$ we
normalize the Haar measure on $\GL_n(F)$ and $\GL_n(\o)$ so that
$\vol(\GL_n(\o))=1$, and we fix the Haar measure on $\U_n(F)$ for which
$\vol(\U_n(F)\cap\GL_n(\o))=1$. From these, we obtain a right-invariant
measure on $\U_n(F)\backslash\GL_n(F)$. We may make this explicit
using the Iwasawa decomposition. For instance, let $dx$ be the Haar
measure on $F$ such that $\o$ has unit volume, and let $d^\times x$ be
the multiplicative measure on $F^\times$ such that $\vol(\o^\times)=1$,
i.e., $d^\times x=\frac{q}{q-1}\frac{dx}{\|x\|}$. Let $dz$ and $da$ be the
corresponding measures on the center $\ZZ_n(F)\cong F^\times$
and the subtorus $\A_n(F)\cong(F^\times)^{n-1}$, respectively.  We fix the
isomorphism $(F^\times)^{n-1}\cong\A_n(F)$ via $(a_1,\ldots,a_{n-1})\mapsto
a=t(a_1,\ldots,a_{n-1})$, where
\begin{equation}\label{a}
t(a_1,\ldots,a_{n-1})=\left(\begin{matrix}a_1a_2\ldots a_{n-1}&&&&\\
&a_1a_2\ldots a_{n-2}&&&\\&&\ddots&&\\&&&a_1&\\&&&&1\end{matrix}\right).
\end{equation}
Then $da=d^\times a_1\,d^\times a_2\cdots d^\times a_{n-1}$.
If $f\in C_c^\infty(\GL_n(F))$ is $\U_n(F)$-invariant on the left,
we then have the integration formula
\begin{equation}\label{Haar}
\int\limits_{\U_n(F)\backslash\GL_n(F)}f(g)\,dg
=\int\limits_{\ZZ_n(F)\times\A_n(F)\times\GL_n(\o)}f(zak)\delta_{\B_n}(a)^{-1}
\,dz\,da\,dk,
\end{equation}
where $\delta_{\B_n}$ is the modulus character, defined so that
\begin{equation}\label{delta}
\delta_{\B_n}(a)=\prod_{i=1}^{n-1}\|a_i\|^{i(n-i)}.
\end{equation}

Next we review the notion of \emph{conductor} and
the theory of the \emph{essential vector} associated to an irreducible,
admissible, generic representation $\pi$. According
to \cite{JPSS} (see also \cite{J1}), there is a unique positive integer
$m(\pi)$ such that the space of $K_1\bigl(\p^{m(\pi)}\bigr)$-fixed
vectors is $1$-dimensional. Further, as alluded to in the introduction,
by loc.~cit.\ there is a unique vector $\xi^0$ in this space, called the
essential vector, with the associated essential function
$W_{\xi^0}\in\W(\pi,\psi)$ satisfying the condition
$W_{\xi^0}\begin{psmallmatrix}gh&\\&1\end{psmallmatrix}
=W_{\xi^0}\begin{psmallmatrix}g&\\&1\end{psmallmatrix}$
for all $h\in\GL_{n-1}(\o)$ and $g\in\GL_{n-1}(F)$.
Since $\U_n$ acts via $\psi$ on the left, it follows that
\begin{equation}\label{support}
W_{\xi^0}\bigl(t(a_1,\ldots,a_{n-1})\bigr)\ne0
\implies
a_1,\ldots,a_{n-1}\in\o.
\end{equation}

If $\pi$ is unramified, let
$W_\pi^{0,\psi}\in\W(\pi,\psi)$ denote the normalized spherical function
\cite[p.~2]{J1}. If $m(\pi)=0$ then by uniqueness of essential functions,
one has the equality $W_{\xi^0}=W_\pi^{0,\psi}$. The integral ideal
$\p^{m(\pi)}$ is called the \emph{conductor} of $\pi$.  In passing, we
mention that the integer $m(\pi)$ can also be characterized as the degree
of the monomial in the local $\epsilon$-factor $\epsilon(s,\pi,\psi)$
\cite{JPSS}, i.e.\ so that
\[
\epsilon(s,\pi,\psi)=\epsilon(\pi,\psi)q^{m(\pi)(\frac12-s)}
\]
for some $\epsilon(\pi,\psi)\in\C^\times$.

A crucial property of the conductor is that $K_0(\p^m(\pi))$ acts on
the space of $K_1(\p^{m(\pi)})$-fixed vectors via the central character
$\omega_\pi$ (cf.~\cite[Section 8]{CPS1}).
Precisely, for
$g=(g_{i,j})\in K_0\bigl(\p^{m(\pi)}\bigr)$, define
\[
\chi_\pi(g)=\begin{cases}
1&\text{if }m(\pi)=0,\\\omega_\pi(g_{n,n})&\text{if }m(\pi)>0.
\end{cases}
\]
It is shown in loc.~cit.\ that $\chi_\pi$ is a character of
$K_0(\p^{m(\pi)})$ trivial on $K_1(\p^{m(\pi)})$, and
\[
\pi(g)\xi^0=\chi_\pi(g)\xi^0
\quad\text{for all }g\in K_0\bigl(\p^{m(\pi)}\bigr).
\]

We end this section by recalling the definition of conductor of a
multiplicative character $\chi$ of $F^\times$. If $\chi$ is trivial on
$\o^\times$ then the conductor of $\chi$ is $\o$; otherwise, the conductor
is $\p^n$, where $n\ge1$ is the least integer such that $\chi$
is trivial on $1+\p^n$.

\subsection{Rankin--Selberg $L$-functions}\label{RS}
In this subsection alone we drop the assumption that $m<n$ and allow
$(m,n)$ to be an arbitrary pair of positive integers.  For $\pi$
and $\tau$ irreducible, admissible, generic representations of
$\GL_n(F)$ and $\GL_m(F)$, respectively, let $L(s,\pi\boxtimes\tau)$
be as defined in \cite{JPSS3}. When $m<n$, $L(s,\pi\boxtimes\tau)$
is defined as in the introduction. For $m>n$, one defines
$L(s,\pi\boxtimes\tau)=L(s,\tau\boxtimes\pi)$. For $m=n$, the defining
local integrals are different and involve a Schwartz function on $F^n$;
see loc.~cit.

Next we elaborate on the definition of the naive Rankin--Selberg
$L$-factor, $L(s,\pi\times\tau)$, introduced in Theorem~\ref{main}.
By definition, the $L$-function $L(s,\pi)$
is of the form $P_\pi(q^{-s})^{-1}$, where $P_\pi\in\C[X]$ has
degree at most $n$ and satisfies $P_\pi(0)=1$. We may then find $n$
complex numbers $\{\alpha_i\}_{i=1}^n$ (allowing some of them to be zero)
satisfying
\[
L(s,\pi)=\prod_{i=1}^n(1-\alpha_{i}q^{-s})^{-1}.
\]
We call the set $\{\alpha_i\}$ the \emph{Langlands parameters}
of $\pi$; if $\pi$ is spherical, they agree with the usual Satake
parameters. Let $\{\gamma_j\}_{j=1}^m$ be the Langlands parameters of
$\tau$, and set
\begin{equation}\label{e:naive_L}
L(s,\pi\times\tau)=\prod_{i=1}^n\prod_{j=1}^m(1-\alpha_i\gamma_jq^{-s})^{-1}.
\end{equation}
Of course, $L(s,\pi\times\tau)=L(s,\pi\boxtimes\tau)$
if both $\pi$ and $\tau$ are spherical.

In the following lemma we describe the connection between
$L(s,\pi\times\tau)$ and $L(s,\pi\boxtimes\tau)$.  To that end, we first
recall the classification of irreducible admissible representations
of $\GL_n(F)$.  Let $\AA_n$ denote the set of equivalence classes of
such representations, and put $\AA=\bigcup\AA_n$.  The essentially
square-integrable representations of $\GL_n(F)$ have been classified
by Bernstein and Zelevinsky, and they are as follows.  If $\sigma$
is an essentially square-integrable representation of $\GL_n(F)$, then
there is a divisor $a\mid n$ and a supercuspidal representation $\eta$
of $\GL_a(F)$ such that if $b=\frac{n}{a}$ and $\Q$ is the standard
(upper) parabolic subgroup of $\GL_n(F)$ of type $(a,\ldots,a)$, then
$\sigma$ can be realized as the unique quotient of the (normalized)
induced representation
\[
\Ind_{\Q}^{\GL_n(F)}(\eta,\eta\|\cdot\|,\ldots,\eta\|\cdot\|^{b-1}).
\]
The integer $a$ and the class of $\eta$ are uniquely determined
by $\sigma$.  In short, $\sigma$ is parametrized by $b$ and $\eta$,
and we denote this by $\sigma=\sigma_b(\eta)$; further, $\sigma$ is
square-integrable (also called ``discrete series'') if and only if
the representation $\eta\|\cdot\|^{\frac{b-1}{2}}$ of $\GL_a(F)$
is unitary.

Now, let $\P$ be an upper parabolic subgroup of $\GL_n(F)$
of type $(n_1,\ldots,n_r)$.
For each $i=1,\ldots,r$, let $\tau_i^0$ be a discrete series representation
of $\GL_{n_i}(F)$.
Let $(s_1,\ldots,s_r)$ be a sequence of
real numbers satisfying $s_1\ge\cdots\ge s_r$, and put
$\tau_i=\tau_i^0\otimes\|\cdot\|^{s_i}$
(an essentially square-integrable representation).
Then the induced representation
\[
\xi=\Ind_{\P}^{\GL_n(F)}(\tau_1\otimes\cdots\otimes\tau_r)
\]
is said to be a representation of $\GL_n(F)$ of Langlands type.
If $\tau\in\AA_n$, then it is well known that it is uniquely representable
as the quotient of an induced representation of Langlands type.
We write $\tau=\tau_1\boxplus\cdots\boxplus\tau_r$ to
denote this realization of $\tau$.
Thus one obtains a sum operation on the set $\AA$ \cite[\S9.5]{JPSS3}.
It follows easily from the definition
that $L(s,\pi\times\tau)$ is bi-additive, i.e.\
\begin{align*}
L(s,\pi\times(\tau\boxplus\tau'))
&=L(s,\pi\times\tau)L(s,\pi\times\tau')\\
L(s,(\pi\boxplus\pi')\times\tau)
&=L(s,\pi\times\tau)L(s,\pi'\times\tau)
\end{align*}
for all $\pi,\pi',\tau,\tau'\in\AA$.  The local factor
$L(s,\pi\boxtimes\tau)$ is also bi-additive in the above sense, by
\cite[\S9.5, Theorem]{JPSS3}.

\begin{lemma}\label{aux}
Let $m$ and $n$ be positive integers, and
consider $\pi\in\AA_n$, $\tau\in\AA_m$.
Then
\[
L(s,\pi\times\tau)=P(q^{-s})L(s,\pi\boxtimes\tau)
\]
for a polynomial $P\in\C[X]$ (depending on $\pi$ and $\tau$)
satisfying $P(0)=1$.
\end{lemma}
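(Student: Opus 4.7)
The plan is to reduce the statement to the case of essentially square-integrable $\pi$ and $\tau$ using bi-additivity of both $L$-factors, and then to dispatch a short case analysis based on the Langlands parameters.

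First I would invoke bi-additivity of $L(s,\pi\times\tau)$ (immediate from its defining product) and of $L(s,\pi\boxtimes\tau)$ (by \cite[\S9.5, Theorem]{JPSS3}). Since a finite product of polynomials with constant term $1$ is itself such, writing the Langlands decompositions $\pi=\boxplus_i\sigma_i$ and $\tau=\boxplus_j\sigma'_j$ reduces the problem to the case where $\pi$ and $\tau$ are each essentially square-integrable.

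In that reduced case I would write $\pi=\sigma_b(\eta)\otimes\|\cdot\|^{s_0}$ with $\eta$ a supercuspidal representation of $\GL_a(F)$, and similarly $\tau=\sigma_{b'}(\eta')\otimes\|\cdot\|^{s'_0}$ with $\eta'$ of $\GL_{a'}(F)$. The Godement--Jacquet $L$-factor $L(s,\sigma_b(\eta))$ is trivial whenever $a>1$, and likewise whenever $a=1$ and $\eta$ is ramified; this is standard via the local Langlands correspondence, since in these cases the underlying Weil--Deligne representation has no inertia-invariant vectors in the kernel of monodromy. In all such situations every Langlands parameter of $\pi$ vanishes, so $L(s,\pi\times\tau)=1$ and the lemma holds trivially with $P=1/L(s,\pi\boxtimes\tau)$, a polynomial in $q^{-s}$ with constant term $1$ by the very definition of the Rankin--Selberg factor. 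The symmetric argument covers the mirror cases for $\tau$.

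The only remaining case is $a=a'=1$ with both characters $\eta=\chi$, $\eta'=\chi'$ unramified; absorbing the twists, I may take $\pi=\sigma_b(\chi)$, $\tau=\sigma_{b'}(\chi')$. These generalized Steinberg representations each have a unique non-zero Langlands parameter, so $L(s,\pi\times\tau)$ is a single Euler factor. The JPSS formula for the Rankin--Selberg factor of a pair of essentially square-integrable representations (see \cite[\S8]{JPSS3}) expresses $L(s,\pi\boxtimes\tau)$ as a product of $\min(b,b')$ Euler factors whose first term agrees with $L(s,\pi\times\tau)$; the ratio then yields $P$ as a product of $\min(b,b')-1$ linear factors in $q^{-s}$, with $P(0)=1$. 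The main obstacle I expect is locating and correctly normalizing the JPSS formula in the paper's conventions; once in hand, the final verification is a direct computation.
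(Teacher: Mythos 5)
Your proposal is correct and follows essentially the same route as the paper: reduce by bi-additivity to essentially square-integrable pairs, dispose of all cases where a standard $L$-factor is trivial using that $1/L(s,\pi\boxtimes\tau)$ is a polynomial in $q^{-s}$ with constant term $1$, and settle the remaining case from the explicit formula of \cite[\S8.2]{JPSS3}. The only difference is organizational: the paper runs the case analysis on $\tau$ alone (treating $m=1$ separately and keeping $\pi=\sigma_b(\eta)$ with general supercuspidal $\eta$ in the final computation), whereas you symmetrize and reduce the last case to a pair of unramified twists of Steinberg representations; your identification of the naive factor with the leading factor of the JPSS product, and of $P$ as the product of the remaining $\min(b,b')-1$ linear factors, is correct.
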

\begin{proof}
Since $\pi$ and $\tau$ are sums of essentially square-integrable
representations and $L(s,\pi\times\tau)$ and $L(s,\pi\boxtimes\tau)$
are both additive with respect to $\boxplus$, it suffices to prove
the lemma for a pair $(\pi,\tau)$ of essentially square-integrable
representations. In particular, assume $\pi=\sigma_b(\eta)$ as above.

We proceed by induction on $m$. If
$m=1$ then $\tau=\chi$ is a quasi-character of $F^\times$
and $L(s,\pi\boxtimes\tau)=L(s,\pi\otimes\chi)$, where
$\pi\otimes\chi$ is the representation of $\GL_n(F)$ defined by
$g\mapsto\pi(g)\chi(\det g)$. If $\chi$ is unramified, then
\[
L(s,\pi\otimes\chi)=L(s,\pi\times\chi),
\]
and consequently $P=1$. On the other hand, if $\chi$ is
ramified then $L(s,\pi\times\chi)=1$, and the assertion follows
since $L(s,\pi\otimes\chi)^{-1}$ is a polynomial in $q^{-s}$.

We now assume $m>1$ and $\tau$ is an essentially square-integrable
representation of $\GL_m(F)$, say $\tau=\sigma_{b'}(\eta')$, where
$\eta'\in\AA_{a'}$ is supercuspidal and $a'b'=m$. Then the standard
$L$-factor $L(s,\tau)$ is given by $L(s,\tau)=L(s+b'-1,\eta')$
\cite{JPSS3}. Therefore, $L(s,\tau)=1$ unless $a'=1$ and $\eta'=\chi$
is an unramified quasi-character of $F^\times$. On the other hand,
if $L(s,\tau)=1$, then $L(s,\pi\times\tau)=1$ and the assertion of the
lemma follows. Hence we may assume $\tau=\sigma_m(\chi)$ for an unramified
quasi-character $\chi$ of $F^\times$, in which case
\begin{equation}\label{e1}
\begin{aligned}
L(s,\pi\times\tau)&=L(s,\pi\otimes\chi\|\cdot\|^{m-1})
=L(s,\sigma_b(\eta)\otimes\chi\|\cdot\|^{m-1})\\
&=L(s+m-1+b-1,\eta\otimes\chi).
\end{aligned}
\end{equation}
On the other hand, it follows from \cite[\S8.2, Theorem]{JPSS3} that
\begin{equation}\label{e2}
L(s,\pi\boxtimes\tau)=\begin{cases}
\prod_{j=0}^{m-1}L(s+j+b-1,\eta\otimes\chi)&\text{if }m\leq n,\\
\prod_{i=0}^{b-1}L(s+m-1+i,\eta\otimes\chi)&\text{if }m>n.
\end{cases}
\end{equation}
From \eqref{e1} and \eqref{e2}, one sees that the ratio
$\frac{L(s,\pi\times\tau)}{L(s,\pi\boxtimes\tau)}$ is a polynomial
in $q^{-s}$, thus proving the lemma.
\end{proof}

\begin{corollary}
If $L(s,\pi\boxtimes\tau)=1$ then either $L(s,\pi)=1$ or $L(s,\tau)=1$.
\end{corollary}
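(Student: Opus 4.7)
My approach is to reduce the corollary to an elementary statement about the product formula for $L(s,\pi\times\tau)$ by invoking Lemma~\ref{aux}. Assume $L(s,\pi\boxtimes\tau)=1$. The lemma then gives $L(s,\pi\times\tau)=P(q^{-s})$ for some polynomial $P\in\C[X]$ with $P(0)=1$; in particular, $L(s,\pi\times\tau)$ is itself a polynomial in $q^{-s}$.

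The next step is to combine this with the defining identity
\[
L(s,\pi\times\tau)=\prod_{i=1}^n\prod_{j=1}^m(1-\alpha_i\gamma_jq^{-s})^{-1}
\]
to constrain the Langlands parameters. Working in the rational function field $\C(X)$ with $X=q^{-s}$, the equality yields
\[
P(X)\prod_{i=1}^n\prod_{j=1}^m(1-\alpha_i\gamma_jX)=1
\]
as polynomials in $X$. Since the right-hand side has degree $0$, every factor on the left must be a non-zero constant; in particular, $\alpha_i\gamma_j=0$ for every pair $(i,j)$.

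To conclude, I would argue by contradiction: if $L(s,\pi)\neq1$ and $L(s,\tau)\neq1$, then there exist indices $i_0$ and $j_0$ with $\alpha_{i_0}\neq0$ and $\gamma_{j_0}\neq0$, so that $\alpha_{i_0}\gamma_{j_0}\neq0$, contradicting the previous step. Hence either all $\alpha_i$ vanish (so $L(s,\pi)=1$) or all $\gamma_j$ vanish (so $L(s,\tau)=1$). The only nontrivial ingredient is Lemma~\ref{aux}; once that is in hand, there is no real obstacle, as the corollary is essentially a statement about when a product $\prod(1-c_kX)^{-1}$ is polynomial.
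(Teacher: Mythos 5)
Your proposal is correct and follows essentially the same route as the paper: Lemma~\ref{aux} forces $L(s,\pi\times\tau)$ to be a polynomial in $q^{-s}$, hence equal to $1$, whence every product $\alpha_i\gamma_j$ vanishes and one of the two standard $L$-factors is trivial. Your write-up merely spells out in more detail the two steps that the paper leaves implicit.
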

\begin{proof}
If $L(s,\pi\boxtimes\tau)=1$ then Lemma~\ref{aux} implies that
$L(s,\pi\times\tau)$ is a polynomial in $q^{-s}$, and hence must
be $1$. This in turn implies the conclusion.
\end{proof}

\section{the main calculation}\label{mc}
Recall that $\xi^0$ and $\varphi^0$ are the essential vectors
of $\pi$ and $\tau$, respectively.  Here we construct a pair
$(W,W')\in\W(\pi,\psi)\times\W(\tau,\psi^{-1})$ as in Theorem~\ref{main}.
Let $\n$, $\q$ and $\c$ denote the conductors of $\pi$, $\tau$ and
$\omega_\tau$, respectively.  If $\tau$ is an unramified representation
of $\GL_m(F)$, then by \eqref{unr3} we have
\[
\Psi(s; W_{\xi^0},W_{\varphi^0})=L(s,\pi\boxtimes\tau)
=L(s,\pi\times\tau).
\]
Thus, in this case we can take $(W,W')=(W_{\xi^0},W_{\varphi^0})$.

Let us assume from now on that $\tau$ is ramified, meaning $\v(\q)>0$.
Since $\c\supseteq\q$, we have $\v(\c)\le\v(\q)$.
Consider $\beta=(\beta_1,\ldots,\beta_m)\in F^m$, with
$\beta_i\in\q^{-1}$ for $i=1,\ldots,m$, and let $u(\beta)$ denote the
$n\times n$ matrix with $1$s on the diagonal and $\beta^t$ embedded above
the diagonal in the $(m+1)^{\text{st}}$ column.
Let $\xi^0_\beta$ denote the vector $\xi_\beta^0=\pi(u(\beta))\xi^0$,
and define
\begin{equation}\label{uav}
\overline{\xi}=\frac1{[\o:\q]^{m-1}}
\sum_{(\beta_1,\ldots,\beta_{m-1})\in(\q^{-1}/\o)^{m-1}}
\xi^0_{(\beta_1,\ldots,\beta_{m-1},\varpi^{-\v(\c)})}.
\end{equation}
(When $m=1$ we understand there to be one summand, so that
$\overline{\xi}=\xi^0_{(\varpi^{-\v(\c)})}$.)
We will now calculate $\Psi(s; W_{\overline{\xi}},W_{\varphi^0})$,
which by linearity equals
\[
\frac1{[\o:\q]^{m-1}}
\sum_{(\beta_1,\ldots,\beta_{m-1})\in(\q^{-1}/\o)^{m-1}}
\Psi\bigl(s;W_{\xi^0_{(\beta_1,\ldots,\beta_{m-1},\varpi^{-\v(\c)})}},W_{\varphi^0}\bigr).
\]

Put $K=\GL_m(\o)$.
By \eqref{Haar}, for fixed $\beta=(\beta_1,\ldots,\beta_m)$, we have
\begin{align*}
\Psi(s;W_{\xi^0_\beta},W_{\varphi^0})
&=\int\limits_{\ZZ_m(F)\times\A_m(F)\times K}\omega_\tau(z)
W_{\xi^0}\!\left(\begin{pmatrix}zak&\\&I_{n-m}\end{pmatrix}u(\beta)\right)
W_{\varphi^0}(ak)\\
&\qquad\qquad\qquad\cdot\delta_{\B_m}(a)^{-1}\|\det{(za)}\|^{s-\frac{n-m}{2}}
\,dz\,da\,dk\\
&=\int\limits_{F^{\times}\times\A_m(F)\times K}
\omega_\tau(z)\left(\sum_{j=1}^{m}\psi(zk_{j}\beta_j)\right)
W_{\xi^0}\!\begin{pmatrix}za&\\&I_{n-m}\end{pmatrix}
W_{\varphi^0}(ak)\\
&\qquad\qquad\qquad\cdot
\delta_{\B_m}(a)^{-1}\|z^m\det{a}\|^{s-\frac{n-m}{2}}\,d^\times z\,da\,dk,
\end{align*}
where $(k_1,\ldots,k_m)$ is the bottom row of the matrix $k$.
Here we have used the fact that the function $h\mapsto
W_{\xi^0}\begin{psmallmatrix}h&\\&I_{n-m}\end{psmallmatrix}$,
$h\in\GL_m(F)$, is right $K$-invariant.  Now, performing the average over
$\beta_j\in\q^{-1}/\o$ for each $j<m$,
we see that $\Psi(s;W_{\overline{\xi}},W_{\varphi^0})$ equals
\begin{align*}
\int\limits_{\left\{\substack{F^\times\times\A_m(F)\times K\\
zk_j\in\q\;\forall j<m}\right\}}
&\omega_\tau(z)\psi\bigl(zk_{m}\varpi^{-\v(\c)}\bigr)
W_{\xi^0}\!\begin{pmatrix}za&\\&I_{n-m}\end{pmatrix}
W_{\varphi^0}(ak)\\
&\cdot\delta_{\B_m}(a)^{-1}\|z^m\det{a}\|^{s-\frac{n-m}{2}}\,d^\times z\,da\,dk.
\end{align*}
By \eqref{support} we have $W_{\xi^0}(t(a_1,\ldots,a_{n-1}))=0$ unless
$a_1,\ldots,a_{n-1}\in\o$. In view of \eqref{a}, it follows that
the integrand vanishes unless $z$ is integral.

Note that
\[
zk_j\in\q\;\forall j<m\iff
k\in K_0(z^{-1}\q\cap\o).
\]
For $r\in\Z_{\ge0}$, put
\begin{align*}
\Psi_r=\int\limits_{\A_m(F)\times K_0(\q\p^{-r}\cap\o)}
&\G(\omega_\tau,\psi,\varpi^{r-\v(\c)}k_m)
W_{\xi^0}\!\begin{pmatrix}\varpi^ra&\\&I_{n-m}\end{pmatrix}
W_{\varphi^0}(ak)\\
&\cdot
\delta_{\B_m}(a)^{-1}\|\det{a}\|^{s-\frac{n-m}{2}}
\,da\,dk,
\end{align*}
where $\G(\omega_\tau,\psi,y)$ denotes the Gauss sum
\[
\G(\omega_\tau,\psi,y)=\int_{\o^\times}\omega_\tau(z)\psi(yz)\,d^\times z.
\]
Then we have
\[
\Psi(s;W_{\overline{\xi}},W_{\varphi^0})=\sum_{r\geq 0}
\omega_\tau(\varpi^r)q^{-rm(s-\frac{n-m}{2})}\Psi_r.
\]

Suppose that $\omega_\tau$ is ramified, so that
$\v(\c)>0$. Then
$\G(\omega_\tau,\psi,\varpi^{r-\v(\c)}k_m)$ vanishes unless
$\v(\varpi^{r-\v(\c)}k_m)=-\v(\c)$, which implies $\v(k_m)=-r$.
Since $k_m$ is integral, it
follows that $r=0$ is the only contributing term to
$\Psi(s;W_{\overline{\xi}},W_{\varphi^0})$, so that
\begin{align*}
\Psi(s;W_{\overline{\xi}},W_{\varphi^0})
=\int\limits_{\A_m(F)\times K_0(\q)}
&\G\bigl(\omega_\tau,\psi,\varpi^{-\v(\c)}k_m\bigr)
W_{\xi^0}\!\begin{pmatrix}a&\\&I_{n-m}\end{pmatrix}
W_{\varphi^0}(ak)\\
&\cdot\delta_{\B_m}(a)^{-1}\|\det{a}\|^{s-\frac{n-m}{2}}
\,da\,dk.
\end{align*}
Moreover, since $k_m\in\o^\times$, we have
$\G(\omega_\tau,\psi,\varpi^{-\v(\c)}k_m)=\omega_\tau(k_m)^{-1}
\G(\omega_\tau,\psi,\varpi^{-\v(\c)})$, and thus
\begin{equation}\label{Psi}
\Psi(s;W_{\overline{\xi}},W_{\varphi^0})
=c\int_{\A_m(F)}W_{\xi^0}\!\begin{pmatrix}a&\\&I_{n-m}\end{pmatrix}
W_{\varphi^0}(a)\delta_{\B_m}(a)^{-1}\|\det{a}\|^{s-\frac{n-m}{2}}\,da,
\end{equation}
where
\[
c=\frac{\G\bigl(\omega_\tau,\psi,\varpi^{-\v(\c)}\bigr)}{[\GL_m(\o):K_0(\q)]}
\ne0.
\]

Suppose now that $\omega_\tau$ is unramified, so that
$\c=\o$ and $m>1$. Then
$\G(\omega_\tau,\psi,\varpi^{r-\v(\c)}k_m)=1$ for all $r$.
If $r>0$ then $K_0(\q\p^{-r}\cap\o)\supsetneq K_0(\q)$; since the conductor
of $\tau$ is $\q$, it follows from \cite[Theorem 5.1]{JPSS} that
\[
\int\limits_{K_0(\q\p^{-r}\cap\o)}W_{\varphi^0}(ak)\,dk=0,
\]
which in turn implies that $\Psi_r=0$.
Hence only the $r=0$ term contributes, and again we arrive at
\eqref{Psi}.

It remains only to identify the integral over $\A_m(F)$.
\begin{lemma}
When $\tau$ is ramified, we have
\[
\int_{\A_m(F)}W_{\xi^0}\!\begin{pmatrix}a&\\&I_{n-m}\end{pmatrix}
W_{\varphi^0}(a)\delta_{\B_m}(a)^{-1}\|\det{a}\|^{s-\frac{n-m}{2}}\,da=L(s,\pi\times\tau).
\]
\end{lemma}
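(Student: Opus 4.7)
The plan is to reduce the torus integral to an explicit lattice sum using the support and invariance properties of the essential Whittaker functions, and then to recognize this sum as the product expansion of $L(s,\pi\times\tau)$ via Matringe's explicit formula for essential functions on the diagonal.

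First I would parametrize $a=t(a_1,\ldots,a_{m-1})$ by writing $a_i=\varpi^{r_i}u_i$ with $u_i\in\o^\times$. By \eqref{support} the integrand vanishes unless every $r_i\ge 0$; moreover, the right $\GL_{n-1}(\o)$-invariance of $W_{\xi^0}$ on matrices $\begin{psmallmatrix}*&\\&1\end{psmallmatrix}$ (and the analogous invariance of $W_{\varphi^0}$) implies that the integrand depends only on the valuations $r_i$, not on the units $u_i$. After collecting the powers of $q$ coming from $\delta_{\B_m}^{-1}(a)$ and $\|\det a\|^{s-\frac{n-m}{2}}$, the integral reduces to an $(m-1)$-fold power series in $q^{-s}$ whose coefficients are products of values of $W_{\xi^0}$ and $W_{\varphi^0}$ at torus points of the form $t(\varpi^{r_1},\ldots,\varpi^{r_{m-1}})$.

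Next I would invoke Matringe's explicit formula \cite{Mat} to evaluate both essential functions at these torus points. Since the block matrix $\begin{pmatrix}t(a_1,\ldots,a_{m-1})&\\&I_{n-m}\end{pmatrix}$ is the element $t(1,\ldots,1,a_1,\ldots,a_{m-1})\in\A_n(F)$, the formula for $\pi$ expresses $W_{\xi^0}$ at this point as a polynomial in the $\{\alpha_i\}$, while the formula for $\tau$ expresses $W_{\varphi^0}(t(a))$ as a polynomial in the $\{\gamma_j\}$. Substituting these expressions into the $(m-1)$-fold sum and interchanging the order of summation, the series should collapse to $\prod_{i,j}(1-\alpha_i\gamma_j q^{-s})^{-1}$.

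The hard part will be the final combinatorial identification. When $\tau$ is unramified this amounts to the classical Cauchy identity for Schur polynomials, and is essentially the content of \cite[Corollary 3.3]{Mat}; for ramified $\tau$ one needs the analogous identity for the more intricate symmetric-function expressions Matringe attaches to general essential functions. If a direct identity proves unwieldy, a viable fallback is to induct on the Bernstein--Zelevinsky structure of $\tau$: one reduces to the case of an essentially square-integrable $\tau$ (with the case $L(s,\tau)=1$ handled by a vanishing argument as in the proof of Lemma~\ref{aux}) and exploits the bi-additivity of $L(s,\pi\times\tau)$ with respect to $\boxplus$ recorded in \S\ref{RS}.
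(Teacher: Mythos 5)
Your reduction of the integral to a sum over torus points $t(\varpi^{r_1},\ldots,\varpi^{r_{m-1}})$ is exactly how the paper begins, but the heart of the lemma --- the evaluation of the essential Whittaker functions at those points --- is left in your proposal as an unresolved ``hard part'', and the shape you anticipate for it is wrong. The input the paper uses is Miyauchi's formula \cite[Theorem 4.1]{Miy}: for any generic representation, ramified or not, the essential function at $\diag(\varpi^{\lambda_1},\ldots,\varpi^{\lambda_n})$ equals $\delta_{\B_n}^{1/2}$ times the Schur polynomial $s_\lambda$ evaluated at the Langlands parameters; there is no ``more intricate symmetric-function expression'' in the ramified case, since ramification is reflected only in some of the parameters being zero. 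Granting this, the ordinary Cauchy identity finishes the proof, and the hypothesis that $\tau$ is ramified enters exactly at the point your sketch glosses over: because the integral is over $\A_m(F)$ only (no central variable), the resulting sum runs over partitions with $\lambda_m=\cdots=\lambda_n=0$, and this truncated sum agrees with the full Cauchy sum only because one may take $\gamma_m=0$, so that $s_\lambda(\gamma)$ vanishes whenever $\lambda$ has $m$ or more nonzero parts; the same fact is what makes the ratio of modulus characters and the factor $\|\det a\|^{-\frac{n-m}{2}}$ cancel. For unramified $\tau$ the identity as stated would in fact fail, so treating the unramified case as the model and the ramified case as a harder variant of it has the logic backwards.

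Your fallback is also not viable: bi-additivity with respect to $\boxplus$ is a property of the $L$-factors alone, whereas the lemma asserts an identity for a specific integral built from the essential vectors of $\pi$ and $\tau$, and the essential vector of $\tau_1\boxplus\tau_2$ does not decompose in any way that splits this integral into the pieces attached to $\tau_1$ and $\tau_2$ (the reduction to square-integrable constituents in Lemma~\ref{aux} works only because that lemma compares two $L$-factors, not integrals). So the proposal identifies the correct skeleton --- lattice sum, explicit torus values, Cauchy-type identity --- but is missing the one substantive ingredient (Miyauchi's Schur-polynomial evaluation together with the observation $\gamma_m=0$) that makes the computation close, and neither of the two routes you offer for that step would succeed as described.
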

\begin{proof}
Let $\alpha=(\alpha_1,\ldots,\alpha_n)$ and
$(\gamma_1,\ldots,\gamma_m)$ denote the Langlands
parameters of $\pi$ and $\tau$, respectively.
Since $\tau$ is ramified, we may take $\gamma_m=0$.
We set $\gamma_{m+1}=\ldots=\gamma_n=0$ and write
$\gamma=(\gamma_1,\ldots,\gamma_n)$.

Writing $a=t(a_1,\ldots,a_{m-1})$ as in \eqref{a},
by \eqref{support} we see that $W_{\varphi^0}(a)$ vanishes unless each
$a_i$ is integral. Setting
$$
\lambda_i=\sum_{1\le j\le m-i}\v(a_j)
\quad\text{for }i=1,\ldots,n,
$$
the integral in question may be written as
\[
\sum _{\substack{\lambda=(\lambda_1,\ldots,\lambda_n)\in\Z_{\ge0}^n\\
\lambda_1\ge\cdots\ge\lambda_{m-1}\\
\lambda_m=\ldots=\lambda_n=0}}
W_{\xi^0}\bigl(d_n(\lambda)\bigr)
W_{\varphi^0}\bigl(d_m(\lambda)\bigr)
\delta_{\B_m}\bigl(d_m(\lambda)\bigr)^{-1}
\|\det d_m(\lambda)\|^{s-\frac{n-m}{2}},
\]
where
$$
d_n(\lambda)=\diag\bigl(\varpi^{\lambda_1},\ldots,\varpi^{\lambda_n}\bigr)
\quad\text{and}\quad
d_m(\lambda)=\diag\bigl(\varpi^{\lambda_1},\ldots,\varpi^{\lambda_m}\bigr).
$$
On the other hand, by \cite[Theorem 4.1]{Miy}, we have
$$
W_{\xi^0}(d_n(\lambda))=\delta_{\B_n}(d_n(\lambda))^{\frac12}s_\lambda(\alpha)
\quad\text{and}\quad
W_{\varphi^0}(d_m(\lambda))=\delta_{\B_m}(d_m(\lambda))^{\frac12}
s_\lambda(\gamma),
$$
where $s_\lambda$ denotes the Schur polynomial
\[
s_\lambda(X_1,\ldots,X_n)=
\frac{\det\bigl(X_j^{\lambda_i+n-i}\bigr)_{1\le i,j\le n}}
{\prod_{1\le i<j\le n}(X_i-X_j)}
\]
(see \cite[Theorem~38.1]{bump}). Thus the integral becomes
\begin{align*}
&\sum _{\substack{\lambda=(\lambda_1,\ldots,\lambda_n)\in\Z_{\ge0}^n\\
\lambda_1\ge\cdots\ge\lambda_{m-1}\\
\lambda_m=\ldots=\lambda_n=0}}
s_\lambda(\alpha)s_\lambda(\gamma)
\left(\frac{\delta_{\B_n}(d_n(\lambda))}{\delta_{\B_m}(d_m(\lambda))}\right)^{\frac12}
\|\det d_m(\lambda)\|^{s-\frac{n-m}{2}}\\
&=\sum _{\substack{\lambda=(\lambda_1,\ldots,\lambda_n)\in\Z_{\ge0}^n\\
\lambda_1\ge\cdots\ge\lambda_n}}
s_\lambda(\alpha)s_\lambda(\gamma)
\|\det d_m(\lambda)\|^s,
\end{align*}
where the last line follows from \eqref{delta}
and the fact that $\gamma_m=\ldots=\gamma_n=0$.

By the Cauchy identity \cite[Theorem~43.3]{bump},
for sufficiently small $x\in\C$, we have
\[
\prod_{i=1}^n\prod_{j=1}^n(1-x\alpha_i\gamma_j)^{-1}
=\sum_{\substack{\lambda=(\lambda_1,\ldots,\lambda_n)\in\Z_{\ge0}^n\\
\lambda_1\ge\cdots\ge\lambda_n}}s_{\lambda}(x\alpha)s_{\lambda}(\gamma)
=\sum_{\substack{\lambda=(\lambda_1,\ldots,\lambda_n)\in\Z_{\ge0}^n\\
\lambda_1\ge\cdots\ge\lambda_n}}s_{\lambda}(\alpha)s_{\lambda}(\gamma)
x^{\lambda_1+\cdots+\lambda_n}.
\]
Therefore,
\begin{align*}
\sum _{\substack{\lambda=(\lambda_1,\ldots,\lambda_n)\in\Z_{\ge0}^n\\
\lambda_1\ge\cdots\ge\lambda_n}}
s_\lambda(\alpha)s_\lambda(\gamma)
\|\det d_m(\lambda)\|^s
=\prod_{i=1}^n\prod_{j=1}^n(1-\alpha_i\gamma_jq^{-s})^{-1}
=L(s,\pi\times\tau).
\end{align*}
\end{proof}

Finally, we take $W=W_{\overline{\xi}}$ and $W'=c^{-1}W_{\varphi^0}$
to conclude the proof of Theorem~\ref{main}.

\bibliographystyle{amsplain}
\bibliography{gln-local}
\end{document}